\title[Plane curves with two or more Galois points]{A family of plane curves with two or more Galois points in positive characteristic}
\author{Satoru Fukasawa}
\subjclass[2000]{14H50, 12F10, 14H05}
\keywords{Galois point, plane curve, positive characteristic, Galois group}
\address{Department of Mathematical Sciences,  
Faculty of Science, Yamagata University, 
Kojirakawa-machi 1-4-12, Yamagata 990-8560, Japan.}
\email{s.fukasawa@sci.kj.yamagata-u.ac.jp} 
\newtheorem{theorem}{Theorem}
\newtheorem{proposition}{Proposition}
\newtheorem{fact}{Fact}
\newtheorem{lemma}{Lemma} 
\newcommand{\fq}{\mathbb{F}_q}
\theoremstyle{definition}
\newtheorem{example}{Example}
\newtheorem{remark}{Remark}
\begin{document}
\begin{abstract} 
We give new examples of plane curves with two or more Galois points as a family, and describe the number of Galois points for these curves, by using finite fields. 
\end{abstract}
\maketitle
\section{Introduction}  
In this paper, we give new examples of plane curves with two or more Galois points as a family, and describe the number of Galois points for these curves, by using finite fields. 

Let $K$ be an algebraically closed field of characteristic $p \ge 0$ and let $C \subset \mathbb P_K^{2}$ be an irreducible plane curve of degree $d \ge 3$. 
Hisao Yoshihara introduced the notion of {\it Galois point} in 1996 (see \cite{miura-yoshihara, yoshihara}). 
If the function field extension $K(C)/K(\mathbb P^1)$, induced by the projection $\pi_P:C \dashrightarrow \mathbb P^1$ from a point $P \in \mathbb P^{2}$, is Galois, then the point $P$ is said to be Galois with respect to $C$. 
When a Galois point $P$ is contained in $\mathbb P^{2} \setminus C$, we call $P$ an outer Galois point. 
We denote by $\delta'(C)$ the number of outer Galois points for $C$. 
It would be interesting to determine $\delta'(C)$. 
For example, there are applications of the distribution of Galois points to finite geometry (see \cite{fhk}). 
When $C$ is smooth, $\delta'(C)$ is completely determined (see \cite{fukasawa3, homma, miura-yoshihara, yoshihara}). 
However, it is difficult to determine $\delta'(C)$ for (singular) curves $C$ in general. 
For example, curves $C$ satisfying $\delta'(C) >1$ are very rare \cite{open}. 

In this paper, we give new examples. 
Let $p>0$, $e \ge 1$ and let 
\begin{eqnarray*}
g_1(x)&:=&x^{p^e}+\alpha_{e-1}x^{p^{e-1}}+\cdots+\alpha_1x^{p}+\alpha_0x, \\
& & g_2(y):=y^{p^e}+\beta_{e-1}y^{p^{e-1}}+\cdots+\beta_1y^{p}+\beta_0y, 
\end{eqnarray*}
where $\alpha_{e-1}, \ldots, \alpha_0, \beta_{e-1}, \ldots, \beta_0 \in K$ and $\alpha_0\beta_0 \ne 0$. 
Assume that $\ell \ge 2$ is an integer such that $\ell$ is not divisible by $p$, $\ell$ divides $p^e-1$, and $\ell$ divides $p^i-1$ if $\alpha_i \ne 0$ or $\beta_i \ne 0$. 
We consider the curve $C \subset \mathbb{P}^2$ of degree $p^e \ell$ (which is the projective closure of the affine curve) defined by 
\begin{equation} \label{twoGalois} 
 g_1(x)^\ell+\lambda g_2(y)^\ell+\mu=0, \tag{I}
\end{equation}  
where $\lambda, \mu \in K\setminus \{0\}$. 
The main theorem describes the number of Galois points in terms of finite fields, as follows. 

\begin{theorem} \label{main} 
Let the characteristic $p>0$, $q=p^e$, $C \subset \mathbb{P}^2$ be the plane curve given by equation $(\ref{twoGalois})$ and let $\alpha \in K$ satisfy $\alpha^{q\ell}+\lambda=0$. 
Then, we have the following. 
\begin{itemize}
\item[(a)] The curve $C$ is irreducible and has exactly $\ell$ singular points. 
\item[(b)] The genus of the smooth model satisfies $p_g(C) \ge \frac{q\ell(q(\ell-1)-2)}{2}+1$. 
Furthermore, the equality holds if and only if $g_1(\alpha X)-\alpha^{q} g_2(X)=0$ as a polynomial. 
\item[(c)] $\delta'(C) \ge 2$. 
\item[(d)] If $\ell$ is odd, $\ell-1$ is not divisible by $p$ and $g_1(\alpha X)-\alpha^{q} g_2(X)=0$, then $\delta'(C)=2$. 
\item[(e)] Let $\ell=2$, $g_1(X)=g_2(X)$, $\mathbb F_{q_0}:=\mathbb F_{q} \cap (\bigcap_{\{i>0 :\alpha_i \ne 0\}} \mathbb F_{p^i})$ and let $\lambda \in \mathbb F_{q_0}$. 
If $\alpha \in \mathbb F_{q_0}$ (resp. $\alpha \not\in \mathbb F_{q_0}$), then $\delta'(C)\ge q_0-1$ (resp. $\delta'(C) \ge q_0+1$). 
Furthermore, if $q_0=q$, then the equality holds. 
\end{itemize} 
\end{theorem}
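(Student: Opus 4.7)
The overall strategy rests on two structural facts. First, $g_1$ and $g_2$ are separable $p$-linearized polynomials, so they are additive with kernels of size $q$ in $K$. Second, the hypothesis $\ell \mid p^i - 1$ whenever $\alpha_i$ or $\beta_i$ is nonzero forces $\omega^{p^i} = \omega$ for every $\omega \in \mu_\ell$ and every such $i$, giving $g_j(\omega z) = \omega g_j(z)$ for $j = 1, 2$. Together with $\alpha^{q\ell} = -\lambda$, these two facts drive all five parts.

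For (a) and (c): I would first show no affine singularities exist by computing $\partial F/\partial x = \ell\alpha_0 g_1(x)^{\ell-1}$ and $\partial F/\partial y = \ell\lambda\beta_0 g_2(y)^{\ell-1}$; a common zero forces $g_1(x) = g_2(y) = 0$, hence $F = \mu \neq 0$. The intersection $C \cap \{Z = 0\}$ reduces to $(X^\ell - \alpha^\ell Y^\ell)^q = 0$, giving exactly $\ell$ points $P_\zeta = (\zeta\alpha : 1 : 0)$, and direct partial checks confirm each is singular. For irreducibility, I would verify $[K(C) : K(x)] = q\ell$: setting $v = g_2(y)$ gives the Kummer extension $v^\ell = -(g_1(x)^\ell + \mu)/\lambda$ of degree $\ell$ (the right-hand side not being an $\ell$-th power in $K[x]$ by a leading-coefficient argument), followed by the separable additive equation $g_2(y) = v$ of degree $q$. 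For (c), the root set of $F(x, Y) = 0$ as a polynomial in $Y$ is $\{\omega y + t : \omega \in \mu_\ell,\ t \in \ker g_2\}$, via $g_2(\omega y + t) = \omega g_2(y)$; all $q\ell$ roots lie in $K(x, y)$, so $(0:1:0)$ is outer Galois with group $\mu_\ell \ltimes \ker g_2$, and $(1:0:0)$ is outer Galois by the symmetric role of $y$.

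For (b), I would apply Riemann--Hurwitz to $\pi_1 : \tilde C \to \Bbb P^1$, $(x, y) \mapsto x$. Over the $q\ell$ finite $x_0$ with $g_1(x_0)^\ell = -\mu$, the fiber consists of $q$ points of tame ramification index $\ell$, contributing $q^2 \ell(\ell - 1)$ to the different. Above $\infty$, letting $N$ be the number of places of $\tilde C$ over $\infty$, the identities $\sum e_{\tilde P} = q\ell$ and $d_{\tilde P} \ge e_{\tilde P} - 1$ give
\[
p_g \ge \frac{q\ell(q(\ell-1) - 2)}{2} + 1 + \frac{q\ell - N}{2},
\]
so the stated lower bound follows from $N \le q\ell$; equality forces each $P_\zeta$ to be an ordinary $q$-fold point with unramified projection. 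To identify the equality case, I would expand $F = A^\ell + \lambda B^\ell + \mu w^{q\ell}$ at $P_\zeta$ in coordinates $\tilde u = X/Y - \zeta\alpha$, $w = Z/Y$, where $A = G_1(\zeta\alpha + \tilde u, w)$ and $B = G_2(1, w)$. By $p$-linearity, $A = (\zeta\alpha)^q + G_1(\tilde u, w) + W_1(w)$ with $W_1(w) = \sum_i \alpha_i (\zeta\alpha)^{p^i} w^{q-p^i}$, and $B = 1 + W_2(w)$ analogously. Using $(\zeta\alpha)^{q\ell} = -\lambda$ and $\zeta^{p^i} = \zeta$, the coefficient of $w^{q-p^i}$ in the $k=1$ term of the binomial expansion reduces to $\ell \alpha^{q(\ell-1)}(\alpha^{p^i}\alpha_i - \alpha^q\beta_i)$, i.e., $\ell \alpha^{q(\ell-1)}$ times the coefficient of $X^{p^i}$ in $g_1(\alpha X) - \alpha^q g_2(X)$. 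When this polynomial vanishes, the relation $(\zeta\alpha)^{q(\ell-1)} W_1 + \lambda W_2 = 0$ holds, and using $\lambda = -(\zeta\alpha)^{q\ell}$ one checks $(\zeta\alpha)^{q(\ell-k)} W_1^k + \lambda W_2^k = 0$ for every $k$, killing all pure-$w$ terms of $F$; the tangent cone reduces to $\ell (\zeta\alpha)^{q(\ell-1)} G_1(\tilde u, w) = \ell (\zeta\alpha)^{q(\ell-1)} w^q g_1(\tilde u / w)$, which splits into $q$ distinct linear factors by separability of $g_1$, giving an ordinary $q$-fold singularity and $N = q\ell$. Conversely, if the polynomial is nonzero, the lowest-degree term of $F$ comes from a pure-$w$ monomial of degree $< q$, the tangent cone is a single line with excess multiplicity, $N < q\ell$, and the inequality is strict.

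For (d) and (e), I would bound $\delta'(C)$ from above via the automorphism group of $\tilde C$. Any putative additional outer Galois point $R$ together with $P_1 = (0:1:0)$ and $P_2 = (1:0:0)$ generates a subgroup of $\mathrm{Aut}(\tilde C)$ that clashes with standard Stichtenoth/Nakajima-type bounds for the genus determined in (b); in (d), the hypotheses $\ell$ odd and $p \nmid \ell - 1$ are exactly what rule out the exceptional large-automorphism families (Hermitian/Roquette-type). For (e), with $\ell = 2$ and $g_1 = g_2$, the curve carries extra $\Bbb F_{q_0}$-rational symmetries from affine transformations $(x, y) \mapsto (x + a, y + b)$ together with the swap $(x, y) \mapsto (y, x)$; I would enumerate the $\Bbb F_{q_0}$-rational linear changes of coordinates sending $(0:1:0)$ to a new outer Galois point, the case split $\alpha \in \Bbb F_{q_0}$ vs.\ $\alpha \notin \Bbb F_{q_0}$ reflecting whether a naturally arising quadratic is split over $\Bbb F_{q_0}$, producing $q_0 - 1$ or $q_0 + 1$. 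Sharpness when $q_0 = q$ follows by the same automorphism-group bound applied to the explicit genus. The main obstacle will be these upper-bound assertions: ruling out additional Galois points in positive characteristic demands careful control of the wild part of $\mathrm{Aut}(\tilde C)$, for which the tangent-cone and ramification analysis in (b) is the essential local input.
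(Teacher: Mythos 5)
Your treatment of (a)--(c) is essentially sound and partly takes a different route from the paper: you prove irreducibility by the tower $K(x)\subset K(x,v)\subset K(x,y)$ with $v=g_2(y)$, whereas the paper bounds the degree of any irreducible component from below by playing Riemann--Hurwitz against the genus formula. Your route can be made to work, but not by a ``leading-coefficient argument'': $K$ is algebraically closed, so every leading coefficient is an $\ell$-th power. What actually saves you is that $g_1(x)^\ell+\mu=\prod_{c^\ell=-\mu}(g_1(x)-c)$ is a \emph{separable} polynomial of degree $q\ell$ (each factor has derivative $\alpha_0\ne0$ and the factors are coprime), hence not a $d$-th power for any $d>1$; you also still owe an argument that $g_2(Y)=v$ stays irreducible of degree $q$ over $K(x,v)$. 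Your Riemann--Hurwitz bookkeeping for (b) and the tangent-cone analysis at the points $(\zeta\alpha:1:0)$ match the content of the paper's Proposition \ref{general property}, and your explicit root set $\{\omega y+t\}$ for (c) is the same mechanism as the paper's group $G=K_P\langle\tau\rangle$.

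The genuine gap is in (d) and (e): the proposed upper bounds on $\delta'(C)$ via ``Stichtenoth/Nakajima-type'' bounds on $|\mathrm{Aut}(\hat C)|$ cannot work for this family. The genus is roughly $q^2\ell^2/2$, and even several Galois groups of order $q\ell$ generate a subgroup whose order is nowhere near the general positive-characteristic automorphism bounds (which are polynomial in $g$ of degree $\ge 3$); indeed the paper's second Example is precisely the Subrao curve, whose automorphism group \emph{exceeds} the classical Hurwitz bound, so no size comparison can pin $\delta'$ down to $2$ or to $q_0\pm1$. The hypotheses ``$\ell$ odd, $p\nmid\ell-1$'' in (d) are not there to exclude Hermitian/Roquette cases: they force $p>2$, guarantee that every divisor $m>1$ of $q\ell$ satisfies $m\ge3$ (so by Fact \ref{Galois covering}(b) any ramification point of a Galois projection is a flex with $I_R(C,T_RC)\ge3$), and keep $\ell(\ell-1)$ prime to $p$ so that the Hessian determinant is not identically zero. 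The paper's actual argument is this local one: the Hessian computation (Lemma \ref{flex}) shows every flex has its tangent line through $(1:0:0)$ or $(0:1:0)$, which forces any outer Galois point to be one of these two. Likewise in (e) the lower bound comes from an explicit factorization of the fiber equation of $\pi_{(\gamma:1:0)}$ for $\gamma\in\Bbb F_{q_0}$ with $\gamma^2+\lambda\ne0$ (the case split $\alpha\in\Bbb F_{q_0}$ versus $\alpha\notin\Bbb F_{q_0}$ just counts how many $\gamma$ are excluded), and the upper bound for $q_0=q$ comes from a Gauss-map computation showing that any tangent line with $q$ contact points is $\Bbb F_q$-rational --- not from any automorphism-group estimate. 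As written, your proposal has no workable mechanism for the upper-bound halves of (d) and (e).
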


The proof of assertion (e) is done by using the projective geometry over a finite field $\fq$.  
It would be interesting that the set of Galois points is described as the set of $\fq$-rational points $\not\in C$ on the line $Z=0$ (see Lemmas \ref{rationality0}, \ref{rationality1} and \ref{rationality2}). 

As another aspect, our curves have non-trivial automorphism groups. 
For example, a famous curve studied by Subrao belongs to our family.  

\begin{example} 
Taking $p >2$, $\ell=2$, $g_1(x)=x^q-x$, $g_2(y)=y^q-y$ and $\lambda=-1$ and making the variable change $X=x+y$ and $Y=x-y$ we obtain the equation 
$$ 
(X^q-X)(Y^q-Y)+\mu=0 
$$  
studied by D. Subrao \cite{subrao}, \cite[Example 11.89]{hkt}, and this is known as an example of an irreducible (ordinary) curve whose automorphism group exceeds the Hurwitz bound under certain assumptions. 
For Galois points in $p=2$, see \cite{fukasawa4}. 
\end{example} 

\begin{remark}
The curve with equation (\ref{twoGalois}) and $\ell=1$ was studied in \cite{fukasawa2}. 
\end{remark}

\section{Preliminaries} 
We introduce a system $(X:Y:Z)$ of homogeneous coordinates on $\Bbb P^2$ with local coordinates $x=X/Z$, $y=Y/Z$ for the affine piece $Z \ne 0$. 
For an irreducible plane curve $C \subset \mathbb{P}^2$, we denote by ${\rm Sing}(C)$ the singular locus of $C$, by $\pi: \hat{C} \rightarrow C$ the normalization of $C$ and by $\hat{\pi}_P$ the composition map $\pi_P \circ \pi: \hat{C} \rightarrow \mathbb P^1$. 
For a point $R \in C \setminus {\rm Sing}(C)$, $T_RC \subset \mathbb P^2$ is the (projective) tangent line at $R$. 
For a projective line $L \subset \mathbb P^2$ and a point $R \in C \cap L$, $I_R(C, L)$ means the intersection multiplicity of $C$ and $L$ at $R$.  
We denote by $\overline{PR}$ the line passing through points $P$ and $R$ when $P \ne R$. 
If $\hat{R} \in \hat{C}$, we denote by $e_{\hat{R}}$ the ramification index of $\hat{\pi}_P$ at $\hat{R}$. 
If $R \in C \setminus {\rm Sing}(C)$ and $\pi(\hat{R})=R$, then we use the same symbol $e_R$ for $e_{\hat{R}}$, by abuse of terminology.  
We note the following elementary fact.  
 
\begin{fact} \label{index}
Let $P \in \mathbb P^2 \setminus C$, let $\hat{R} \in \hat{C}$ and let $\pi(\hat{R})=R$. 
Let $h=0$ be a local equation for the line $\overline{PR}$ in a neighborhood of $R$. 
Then, for $\pi_P$ we have 
$$e_{\hat{R}}={\rm ord}_{\hat{R}}(\pi^*h). $$  
In particular, if $R$ is smooth, then $e_R=I_R(C, \overline{PR})$. 
\end{fact}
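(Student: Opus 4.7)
The plan is to reduce the statement directly to the definition of the ramification index by choosing coordinates on $\Bbb P^2$ in which the projection $\pi_P$ becomes an explicit linear projection. After applying a projective automorphism, I may assume $P=[0:1:0]$. The pencil of lines through $P$ consists of the lines $X-aZ=0$ together with $Z=0$, and under the identification of $\Bbb P^1$ with this pencil via $[X:Y:Z]\mapsto[X:Z]$, the projection $\pi_P$ is simply $[X:Y:Z]\mapsto[X:Z]$. On the affine chart $Z\neq 0$ containing $R$ this is the coordinate projection $(x,y)\mapsto x$.

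Next I would identify the local parameters on both sides. Since $R=(x(R),y(R))$ lies on the affine chart, the line $\overline{PR}$ has affine equation $x-x(R)=0$, so I can take $h=x-x(R)$ as the local equation of $\overline{PR}$ near $R$. On the target $\Bbb P^1$, the point $\pi_P(R)$ corresponds to the coordinate value $x(R)$, for which a local parameter is $t=x-x(R)$. Therefore $\hat\pi_P^{\,*}t=\pi^*(x-x(R))=\pi^*h$. By the very definition of the ramification index,
$$e_{\hat R}=\operatorname{ord}_{\hat R}(\hat\pi_P^{\,*}t)=\operatorname{ord}_{\hat R}(\pi^*h),$$
which is the first assertion. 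Note $h$ does vanish at $R$, so $\pi^*h$ is a nonzero element of the maximal ideal at $\hat R$ and the order is a well-defined positive integer.

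For the ``in particular'' clause, when $R$ is a smooth point of $C$ the normalization map $\pi$ is an isomorphism at $\hat R$, so $\operatorname{ord}_{\hat R}(\pi^*h)=\operatorname{ord}_R(h|_C)$. This last quantity is exactly $I_R(C,\overline{PR})$ by the standard characterization of the intersection multiplicity of a curve with a line at a smooth point as the order of vanishing of the line's defining linear form restricted to the curve. Since no genuine geometric content is used beyond the definition of ramification and of intersection multiplicity, the main (and only) obstacle is the bookkeeping of the identification $\Bbb P^1\cong\{\text{lines through }P\}$ and the explicit choice of local parameter; this is more a notational check than a substantive step.
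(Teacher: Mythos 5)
Your argument is correct and is the standard verification; the paper offers no proof at all, simply recording this as an ``elementary fact,'' so your coordinate computation (reducing $\pi_P$ to $(x,y)\mapsto x$ and matching the local parameter $t=x-x(R)$ on $\Bbb P^1$ with the local equation $h$ of $\overline{PR}$) supplies exactly the bookkeeping the author leaves implicit. The only point worth adding is that after normalizing $P=[0:1:0]$ you should also arrange (or note that one can arrange) that $R$ lies in the affine chart $Z\ne 0$, e.g.\ by choosing the line at infinity through $P$ but not through $R$; with that remark the proof is complete.
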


The following fact is useful (see \cite[III. 7.2]{stichtenoth}). 
\begin{fact} \label{Galois covering} 
Let $C, C'$ be smooth curves, let $\theta: C \rightarrow C'$ be a Galois covering of degree $d$ and let $R, R' \in C$. 
Then we have the following. 
\begin{itemize}
\item[(a)] If $\theta(R)=\theta(R')$, then $e_R=e_{R'}$. 
\item[(b)] The index $e_R$ divides the degree $d$. 
\end{itemize}
\end{fact}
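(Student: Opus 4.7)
The plan is to exploit the fundamental correspondence between the Galois covering $\theta\colon C \to C'$ and the Galois function-field extension $K(C)/K(C')$, and to use the group $G := \mathrm{Gal}(K(C)/K(C'))$ acting on $C$ as a group of deck transformations of order $d$ that permutes each set-theoretic fibre $\theta^{-1}(Q)$.

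For part (a), the first step is to show that $G$ acts transitively on $\theta^{-1}(\theta(R))$. This is a standard consequence of Galois theory in the function-field setting: the points of $\theta^{-1}(Q)$ correspond bijectively to the discrete valuations of $K(C)$ lying over the valuation of $K(C')$ attached to $Q$, and the decomposition groups are conjugate under $G$, so $G$ acts transitively on these valuations. Hence, if $\theta(R)=\theta(R')$, there exists $\sigma \in G$ with $\sigma(R)=R'$. Because $\sigma$ is an automorphism of $C$ satisfying $\theta\circ\sigma=\theta$, it induces an isomorphism of local rings $\mathcal{O}_{C,R}\xrightarrow{\sim}\mathcal{O}_{C,R'}$ that identifies the two local uniformizers and carries $\theta^{*}t$ to itself, where $t$ is a uniformizer of $\mathcal{O}_{C',\theta(R)}$. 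Since $e_R=\mathrm{ord}_R(\theta^{*}t)$ and $e_{R'}=\mathrm{ord}_{R'}(\theta^{*}t)$, one concludes $e_R=e_{R'}$.

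For part (b), I would apply the fundamental equality for a finite morphism of smooth projective curves over the algebraically closed field $K$: for any $Q\in C'$,
\[
 d \;=\; \sum_{R''\in\theta^{-1}(Q)} e_{R''}.
\]
By part (a), every term on the right equals the common value $e_R$, so $d = e_R\cdot |\theta^{-1}(Q)|$, and therefore $e_R$ divides $d$.

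There is no real technical obstacle here; both ingredients (transitivity of $G$ on fibres and the sum-of-ramification-indices formula) are foundational. The only place that requires mild care is verifying that a deck transformation really preserves the local order of vanishing, but this is immediate once one translates the geometric statement into the statement that $\sigma\in G$ fixes the subfield $K(C')$ pointwise and hence fixes $\theta^{*}t$.
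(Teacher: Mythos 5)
Your proof is correct, and it is the standard argument: the paper does not prove this Fact at all but simply cites Stichtenoth [III.7.2], where precisely your two ingredients (transitivity of the Galois group on fibres, so that a deck transformation fixing $\theta^{*}t$ identifies the orders of vanishing at $R$ and $R'$, together with the fundamental identity $d=\sum_{R''\in\theta^{-1}(Q)}e_{R''}$, valid here since residue degrees are trivial over an algebraically closed field) are used. So your write-up matches the intended justification; nothing is missing.
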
 

Note that the polynomial $g(X)=X^{p^e}+\alpha_{e-1}X^{p^{e-1}}+\cdots+\alpha_1X^{p}+\alpha_0X \in K[X]$
has the property 
$$ g(X+Y)=g(X)+g(Y) \ \ \ (\in K[X, Y]). $$

\section{Proof of assertions $(a)$, $(b)$ and $(c)$ in Theorem \ref{main}} 
Let $p>0$, let $\ell \ge 2$ be an integer not divisible by $p$ and let $g(x)=x^{p^e}+\alpha_{e-1}x^{p^{e-1}}+\cdots+\alpha_1x^p+\alpha_0x$, where $\alpha_{e-1}, \ldots, \alpha_0 \in K$ and $\alpha_0 \ne 0$. 
We consider the plane curve $C \subset \mathbb P^2$ (which is the projective closure of the affine curve) defined by 
$$ f(x, y):=g(x)^{\ell}+h(y)=0, $$
where $h(y)$ is a polynomial of degree $p^e\ell$. 
Assume that $\ell$ divides $p^e-1$, and $p^i-1$ if $\alpha_i \ne 0$. 
For a constant $a \in K$ with $g(a)=0$, we denote by $\sigma_a$ the linear transformation defined by $(X:Y:Z) \mapsto (X+aZ:Y:Z)$. 
Let $q=p^e$, $P=(1:0:0)$ and let $K_P:=\{\sigma_a: g(a)=0 \}$. 
Then, $K_P$ is a subgroup of ${\rm Aut}(\mathbb P^2)$ of order $q$. 
We take a primitive $\ell$-th root $\zeta$ of unity. 
Let $\tau$ be the linear transformation given by $(X:Y:Z) \mapsto (\zeta X:Y:Z)$ and let $G:=K_P\langle\tau\rangle$. 
Since $g(\zeta a)=\zeta g(a)=0$ if $g(a)=0$, we have $\langle\tau\rangle K_P=K_P\langle \tau \rangle$. 
This implies that $G$ is a subgroup of ${\rm Aut}(\mathbb P^2)$ of order $q \ell$. 
Let $L$ be a line passing through $P$. 
Then, $L$ is given by $cY+dZ=0$ for some $c, d \in K$. 
Since $\sigma_a(x:-d:c)=(x+ac:-d:c)$ and $\tau(x:-d:c)=(\zeta x:-d:c)$, we have $\xi (L)=L$ for any $\xi \in G$. 
Since $g(x+a)=g(x)+g(a)=g(x)$ for any $a$ with $g(a)=0$ and $g(\zeta x)=\zeta g(x)$, $\xi(C)=C$ for any $\xi \in G$. 
The group $G$ will be the Galois group $G_P$ at $P$ if one is able to prove that $C$ is irreducible, because $G$ gives $q\ell$ automorphisms of $C$ which commute with the projection $\pi_P: C \rightarrow \mathbb P^1$. 

\begin{proposition} \label{irreducibility} 
Let $C \subset \mathbb{P}^2$ be given by $f(x, y)=g(x)^\ell+h(y)=0$. 
If $h(y)$ is a separable polynomial of degree $q \ell$, then $C$ is irreducible.
Furthermore, the genus of the smooth model satisfies $p_g(C) \ge \frac{q\ell(q(\ell-1)-2)}{2}+1$. 
\end{proposition}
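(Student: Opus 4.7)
The plan is to combine a $G$-invariance argument for irreducibility with a Riemann--Hurwitz computation on an auxiliary covering for the genus bound. Throughout, let $u := g(x)^\ell$, and observe that $K[x,y]^G = K[u,y]$, where $G = K_P\langle\tau\rangle$ has order $q\ell$: the $K_P$-invariants inside $K[x,y]$ are $K[g(x),y]$, and the further $\langle\tau\rangle$-invariants (with $\tau$ acting on $g(x)$ by multiplication by $\zeta$) are $K[g(x)^\ell,y] = K[u,y]$.

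For irreducibility, note first that in $K[u,y]$ the polynomial $f = u + h(y)$ is linear in $u$ with content $1$ in $K[y]$, hence irreducible there. Given any factorization $f = c\prod_i f_i$ into irreducibles in $K[x,y]$, the group $G$ permutes the $f_i$, and the product over each $G$-orbit is $G$-invariant, hence lies in $K[u,y]$ and divides $f$ there; irreducibility of $f$ in $K[u,y]$ forces a single orbit. To show that this orbit is a singleton, set $V_1 := K_P \cap \mathrm{Stab}_G(f_1)$ and $L(x) := \prod_{a\in V_1}(x-a)$, so that $f_1 = \tilde f_1(L(x),y)$ for some $\tilde f_1 \in K[w,y]$ and $g$ factors as $g = g'\circ L$. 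Multiplying $f_1$ by its $K_P$-translates produces a polynomial in $w := g(x)$ and $y$, and the further product over the $\langle\tau\rangle$-orbit yields an identity
\[
\prod_{j=0}^{m-1} \hat F_0(\zeta^{-j} w, y) \;=\; c\,(w^\ell + h(y))
\]
in $K[w,y]$, where $m$ divides $\ell$. Matching the $w$-leading and $w$-constant coefficients, after verifying that the middle-degree coefficients of $\hat F_0$ in $w$ must vanish, yields an identity of the form $\phi(y)^m = \pm h(y)$ for some $\phi \in K(y)$. Since $h$ is squarefree of positive degree, it is not an $m$-th power in $K(y)$ for $m \ge 2$, so $m = 1$; an analogous collapse in the $K_P$-direction, applied with $g$ replaced by the additive polynomial $g'$, forces $V_1 = K_P$, and therefore $r = 1$.

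For the genus bound, let $C_0 \subset \mathbb{A}^2_{u,y}$ denote the curve $u^\ell + h(y) = 0$. The same squarefree-versus-$m$-th-power argument, applied directly in $K[u,y]$, shows $C_0$ is irreducible. The morphism $\phi: C \to C_0$, $(x,y) \mapsto (g(x),y)$, has Jacobian determinant $g'(x) = \alpha_0 \ne 0$, so $\phi$ is étale on the affine charts; in particular it is a separable morphism between the smooth projective models of degree $q$. Riemann--Hurwitz for $\phi$ thus gives
\[
2p_g(C) - 2 \;\ge\; q\,(2p_g(C_0) - 2).
\]
Applying Riemann--Hurwitz a second time to the cyclic $\ell$-cover $C_0 \to \mathbb{P}^1_y$, which is ramified with index $\ell$ at the $q\ell$ roots of $h$ and unramified over $y=\infty$ (a short local calculation in which the nonvanishing leading coefficient of $h$ produces $\ell$ distinct unramified points at infinity), gives $p_g(C_0) = \frac{\ell(q(\ell-1) - 2)}{2} + 1$. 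Substituting yields $p_g(C) \ge \frac{q\ell(q(\ell-1)-2)}{2} + 1$.

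The main obstacle is the reduction step in the irreducibility argument, namely showing that the stabilizer of $f_1$ in $K_P$ must be all of $K_P$; the squarefreeness of $h$ is exactly what is needed to contradict a nontrivial $m$-th power identity at the end of this reduction. Once the reduction is in place, everything else is a direct coefficient-matching calculation or a routine application of Riemann--Hurwitz.
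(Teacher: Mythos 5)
Your irreducibility argument has a genuine gap at its central step. The collapse in the $\tau$-direction does work: evaluating $\prod_{j=0}^{m-1}\hat F_0(\zeta^{-j}w,y)=c\,(w^\ell+h(y))$ at $w=0$ gives $\hat F_0(0,y)^m=c\,h(y)$, and a squarefree polynomial of positive degree is not an $m$-th power, so $m=1$. But the ``analogous collapse in the $K_P$-direction'' is not analogous. After reducing to a free orbit, the identity to be contradicted is $\prod_{a\in\ker g'}\tilde f_1(s+a,y)=c\,(g'(s)^\ell+h(y))$, and evaluating at $s=0$ yields $\prod_{a}\tilde f_1(a,y)=c\,h(y)$: a product of $q/|V_1|$ \emph{pairwise distinct} degree-$\ell$ polynomials in $y$ whose product is $h$. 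Squarefreeness of $h$ forbids repeated factors, not a factorization into distinct coprime pieces, so nothing in your argument excludes $f$ splitting into a free $K_P/V_1$-orbit of factors --- and that is exactly the hard case. The phrase ``after verifying that the middle-degree coefficients of $\hat F_0$ in $w$ must vanish'' is likewise an assertion, not an argument. (A smaller unaddressed point: the product over a $G$-orbit of irreducible factors is only semi-invariant --- $\tau$ may multiply it by a power of $\zeta$ --- so a priori it lies in $g(x)^kK[u,y]$ rather than $K[u,y]$; this is repairable because no linear factor of $g(x)$ divides $f$, but it must be said.)

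For comparison, the paper avoids this combinatorics entirely: it takes an arbitrary irreducible component $C_0$ of degree $d_0$, notes that each of the $q\ell$ lines $Y=\xi Z$ (for $\xi$ a root of the separable $h$) meets $C_0$ in $d_0/\ell$ smooth points with tame intersection multiplicity $\ell$, applies Riemann--Hurwitz to the projection from $(1:0:0)$ to get $2p_g(C_0)-2\ge d_0(q(\ell-1)-2)$, and plays this against the plane-curve bound $2p_g(C_0)-2\le d_0(d_0-3)$ to force $d_0\ge q(\ell-1)+1>q\ell/2$; two components cannot both exceed half the total degree. This handles both the $K_P$- and $\tau$-directions at once, which is precisely what your descent fails to do. Your genus computation, on the other hand, is correct once irreducibility is granted: routing through the degree-$q$ separable map onto the auxiliary curve $u^\ell+h(y)=0$ of genus $\frac{\ell(q(\ell-1)-2)}{2}+1$ is a legitimate alternative to the paper's direct Riemann--Hurwitz computation for $\pi_P$ and yields the same bound. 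So the proposal needs either the missing $K_P$-step supplied or the irreducibility proof replaced by the degree-versus-genus argument.
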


\begin{proof}
Let $C_0$ be an irreducible component of $C$ with degree $d_0$ and the genus of the smooth model $p_g(C_0)$. 
Since $f_x=\ell g(x)^{\ell-1}$ and $f_y=h_y$, $C$ is smooth in the affine plane $Z \ne 0$. 
In this affine plane, $C_0$ is smooth and $C_0$ does not intersect another irreducible component. 
Let $F(X, Y, Z)=Z^{q\ell}f(X/Z, Y/Z)$, $G(X, Z)=Z^{q}g(X/Z)$ and let $H(Y, Z)=Z^{q\ell}h(Y/Z)$. 
For $\xi \in K$ with $h(\xi)=0$, we denote by $L_{\xi}$ the line defined by $Y-\xi Z=0$. 
The scheme $C \cap L_{\xi}$ is given by $F(X, \xi Z, Z)=G(X, Z)^\ell=0$. 
Since $G(X, Z)$ is a separable polynomial of degree $q$, the multiplicity of the set $C \cap L_{\xi}$ is $\ell$ at each point and the set $C \cap L_{\xi}$ consists of exactly $q$ points. 
Therefore, the multiplicity of the set $C_0 \cap L_{\xi}$ is $\ell$ at each point and the set $C_0 \cap L_{\xi}$ consists of exactly $d_0/\ell$ points.
We consider the projection $\pi_P:C_0 \rightarrow \mathbb P^1$ from $P$. 
Since each point of $C_0 \cap L_{\xi}$ is tame ramification with index $\ell$, we have
$$ 2p_g(C_0)-2 \ge d_0(-2)+q\ell \times (d_0/\ell)(\ell-1)$$
by the Riemann-Hurwitz formula. 
Then, $2p_g(C_0)-2 \ge d_0(q(\ell-1)-2)$. 
On the other hand, by genus formula, we have $2p_g(C_0)-2 \le (d_0-3)d_0$. 
Therefore,  $d_0 \ge q(\ell-1)+1$.  
We find that any irreducible component of $C$ is of degree at least $(q\ell)/2+1$. 
Since the degree of $C$ is $q\ell$, $C$ is irreducible. 

Using the Riemann-Hurwitz formula again, we have the inequality $p_g(C) \ge \frac{q\ell(q(\ell-1)-2)}{2}+1$. 
\end{proof}

We consider the plane curves with equation (\ref{twoGalois}) in Introduction. 
Points $(1:0:0)$, $(0:1:0)$ are outer Galois, by the argument before Proposition \ref{irreducibility}. 
We have $\delta'(C) \ge 2$.  

\begin{remark}
The Galois group $G$ at $(1:0:0)$ or $(0:1:0)$ is isomorphic to $(\mathbb Z/p\mathbb Z)^{\oplus e} \rtimes \langle \zeta \rangle$, since we have the splitting exact sequence of groups (see \cite[Theorem 2]{fukasawa1}): 
$$ 0 \rightarrow (\mathbb Z/p\mathbb Z)^{\oplus e} \rightarrow G \rightarrow \langle \zeta \rangle \rightarrow 1. $$
\end{remark}

\begin{proposition} \label{general property}
Let $C \subset \mathbb{P}^2$ be given by $f(x, y):=g_1(x)^{\ell}+\lambda g_2(y)^{\ell}+\mu=0$ and let $\alpha \in K$ satisfy $\alpha^{q\ell}+\lambda=0$. 
Then, we have the following. 
\begin{itemize}
\item[(a)] We have ${\rm Sing}(C)=\{(\zeta^i \alpha:1:0): 0 \le i \le \ell-1\}$. In particular, $C$ has exactly $\ell$ singular points. 
\item[(b)] Assume that $g_1(\alpha X)-\alpha^{q} g_2(X)=0$ as a polynomial. 
Then, $\pi^{-1}(Q)$ consists of exactly $q$ points and there exist $q$ distinct tangent directions at $Q$, for any $Q \in {\rm Sing}(C)$.
In this case, $p_g(C)=\frac{q\ell(q(\ell-1)-2)}{2}+1$.   
\item[(c)] Assume that $g_1(\alpha X)-\alpha^q g_2(X) \ne 0$. 
Then, the multiplicity at $Q$ is at most $q-1$ and $\pi^{-1}(Q)$ consists of at most $q/v$ points for some $v$ a power of $p$. 
In this case, the genus of the smooth model satisfies $p_g(C) \ge \frac{q\ell(q(\ell-1)-1)}{2}+1$. 
\end{itemize}
\end{proposition}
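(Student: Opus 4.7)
The plan is to derive (a), (b), and (c) from a single local expansion at $Q = (\alpha:1:0)$. For (a), in the chart $Z \ne 0$ the partials $f_x = \ell\alpha_0 g_1(x)^{\ell-1}$ and $f_y = \ell\lambda\beta_0 g_2(y)^{\ell-1}$ vanish together only where $g_1(x) = g_2(y) = 0$, but at such points $f = \mu \ne 0$; hence $C$ is smooth on the affine chart. Setting $Z = 0$ in the homogeneous equation yields $X^{q\ell} + \lambda Y^{q\ell} = 0$, whose solutions in $Y = 1$ are precisely the $\ell$ points $(\zeta^i\alpha : 1 : 0)$. That each is singular (multiplicity at least $2$) is a by-product of the multiplicity calculation below, using the standing assumption $\ell \mid q-1$.

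For (b) and (c), I use local coordinates $s = u - \alpha$ and $t = Z$ at $Q$ in the chart $Y = 1$. Additivity of the $p$-polynomial $g_1$ produces the decomposition
\[
G_1(\alpha + s,\, t) \;=\; \alpha^q G_2(1, t) + R(t) + B(s, t),
\]
where $B(s, t) = t^q g_1(s/t) = \prod_{g_1(a) = 0}(s - a t)$ is homogeneous of degree $q$ and splits into $q$ distinct linear forms by separability of $g_1$, and $R(t) = \sum_{i = 0}^{e-1}\bigl(\alpha_i \alpha^{p^i} - \alpha^q \beta_i\bigr)\, t^{\,q - p^i}$ encodes the polynomial $g_1(\alpha X) - \alpha^q g_2(X)$. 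Using $\alpha^{q\ell} = -\lambda$ to cancel the zero-$t$-order part of $G_1^\ell + \lambda G_2^\ell$ then gives
\[
F(\alpha + s,\, 1,\, t) \;=\; \ell\alpha^{q(\ell-1)} G_2(1, t)^{\ell-1}\bigl(B(s,t) + R(t)\bigr) + \bigl(\text{higher-order terms in } B + R\bigr) + \mu t^{q\ell}.
\]

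In case (b), $R \equiv 0$ and the leading form of $F$ at $Q$ is $\ell\alpha^{q(\ell-1)} B(s,t)$, so $Q$ is an ordinary $q$-uple point with $q$ smooth branches of distinct tangent directions; by Fact \ref{index} each preimage has ramification index $1$ under $\hat\pi_P$. The fiber over $\infty$ is therefore unramified, and the Riemann--Hurwitz inequality from the proof of Proposition \ref{irreducibility} becomes an equality, yielding $p_g(C) = \frac{q\ell(q(\ell-1)-2)}{2} + 1$. In case (c), choose $i_1$ maximal with $\gamma_{i_1} := \alpha_{i_1}\alpha^{p^{i_1}} - \alpha^q\beta_{i_1} \ne 0$; the leading term of $F$ is then the pure $t$-monomial $\ell\alpha^{q(\ell-1)}\gamma_{i_1}\, t^{q - p^{i_1}}$ of degree $q - p^{i_1} \le q - 1$, so ${\rm mult}_Q(C) \le q-1$. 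Since $\hat\pi_P$ is Galois of degree $q\ell$ with group $G \cong (\mathbb{Z}/p)^{\oplus e} \rtimes \langle\zeta\rangle$ and $\langle\tau\rangle$ permutes the $\ell$ singular points transitively, Fact \ref{Galois covering}(a) provides a common ramification index $v$ above $\infty$, and Fact \ref{index} gives $n v = q$ for $n = |\pi^{-1}(Q)|$. Because $\gcd(q,\ell) = 1$, Fact \ref{Galois covering}(b) forces $v \mid q$, so $v$ is a power of $p$ and $n = q/v$. For the genus, purely $p$-Galois inertia satisfies $G_0 = G_1$, giving the wild-ramification bound $d \ge 2(v-1) \ge v$ on the different exponent at each wild point; hence the ramification above $\infty$ contributes at least $\ell \cdot n v = q\ell$ to the different, and Riemann--Hurwitz yields $p_g(C) \ge \frac{q\ell(q(\ell-1)-1)}{2} + 1$.

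The principal technical hurdle is the algebraic identity coming from additivity of $g_1$ that produces the clean $B$--$R$ decomposition of $G_1(\alpha + s, t)$; once in hand, everything else reduces to Riemann--Hurwitz bookkeeping together with Facts \ref{index} and \ref{Galois covering} and the wild-ramification estimate coming from the $p$-group structure of the inertia subgroups of $G$.
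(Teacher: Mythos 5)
Your proof is correct, and for the heart of the proposition --- the local analysis at $Q=(\alpha:1:0)$ --- it takes a genuinely different route from the paper. The paper studies the projection $\pi_Q$ from $Q$ itself: with $t=x-\alpha y$ it observes that the $y$-degree of $\hat f(t,y)$ drops from $q\ell$ to $q(\ell-1)$ exactly when $g_1(\alpha X)-\alpha^qg_2(X)=0$ (whence $\mathrm{mult}_Q(C)=q$ in case (b) and $\le q-1$ in case (c)), and then invokes the Eisenstein-type criterion of Stichtenoth, III.1.14, to see that each root $t_0$ of $g_1(t)$ carries a single totally ramified place of $\hat\pi_Q$, i.e.\ one smooth branch per tangent direction. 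You instead compute the leading form of $F$ at $Q$ via the decomposition $G_1(\alpha+s,t)=\alpha^qG_2(1,t)+R(t)+B(s,t)$, which is a correct identity (both sides equal $t^qg_1((\alpha+s)/t)$), and the cancellation $\alpha^{q\ell}G_2^\ell+\lambda G_2^\ell=0$ does isolate $\ell\alpha^{q(\ell-1)}(B+R)$ as the lowest-order contribution. This exhibits $Q$ as an ordinary $q$-fold point in case (b), so the $q$ smooth branches with distinct tangents are immediate and, since no tangent direction is $t=0$, Fact~\ref{index} gives unramifiedness of $\hat\pi_P$ over $(1:0)$; in case (c) it yields the sharper statement $\mathrm{mult}_Q(C)=q-p^{i_1}$ (which is $\ge 2$, since $\ell\mid q-1$ forces $q\ge 3$ and $p^{i_1}\le q/p$), hence $n=|\pi^{-1}(Q)|\le q-1$ and $v=q/n>1$, which you need for the wild bound $d\ge 2(v-1)\ge v$ and should perhaps state explicitly. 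Your approach is more self-contained (no appeal to the totally-ramified-place criterion) and gives finer local information, at the cost of a longer expansion; the paper's is quicker once the cited criterion is granted. The rest --- part (a), the $\tau$-symmetry giving a common index $v$ with $nv=q$ via Fact~\ref{Galois covering}, and the Riemann--Hurwitz bookkeeping building on Proposition~\ref{irreducibility} --- coincides with the paper's argument, whose displayed inequality uses exactly your contribution $(q\ell/v)\cdot v$ over the line at infinity.
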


\begin{proof}
We prove (a). 
Let $F(X, Y, Z)=Z^{q\ell}f(X/Z, Y/Z)$, $G_1(X, Z)=Z^{q}g_1(X/Z)$ and $G_2(Y, Z)=Z^{q}g_2(Y/Z)$. 
We have 
\begin{eqnarray*} 
& & \frac{\partial F}{\partial X}=\ell G_1(X, Z)^{\ell-1}\alpha_0Z^{q-1},\ \frac{\partial F}{\partial Y}=\lambda\ell G_2(Y, Z)^{\ell-1}\beta_0Z^{q-1}, \\
& & \frac{\partial F}{\partial Z} 
=\ell(q-1) G_1(X, Z)^{\ell-1}\alpha_0XZ^{q-2}+\lambda\ell(q-1)G_2(Y, Z)^{\ell-1}\beta_0YZ^{q-2}. 
\end{eqnarray*} 
Therefore, the singular locus is given by $Z=X^{q\ell}+\lambda Y^{q\ell}=0$. 
We have ${\rm Sing}(C)=\{(\zeta^i \alpha:1:0): 0 \le i \le \ell-1\}$, which consists of exactly $\ell$ points. 

We prove (b). 
Note that ${\rm Sing}(C)=\{(\zeta^i \alpha:1:0): 0 \le i \le \ell-1\}$ by (a).
Let $Q=(\alpha:1:0)$. 
We consider the projection $\pi_Q$ from $Q$. 
If we take $t:=x-\alpha y$, then the projection $\pi_Q$ is given by $\pi_R(x:y:1)=(t:1)$ and we have a field extension $K(t, y)/K(t)$ with the equation $\hat{f}(t, y):=g_1(t+\alpha y)^\ell+\lambda g_2(y)^\ell+\mu=0$. 
Note that 
$$g_1(t+\alpha y)^\ell=(g_1(t)+g_1(\alpha y))^{\ell}=\sum_{i=0}^{\ell}\binom{\ell}{i} g_1(t)^{\ell-i} g_1(\alpha y)^i, $$
and 
$$\hat{f}(t, y)=\sum_{i=0}^{\ell-1}\binom{\ell}{i} g_1(t)^{\ell-i} g_1(\alpha y)^i+g_1(\alpha y)^\ell+\lambda g_2(y)^\ell+\mu. $$
Note that
$$ g_1(\alpha y)^{\ell}+\lambda g_2(y)^\ell=\prod_{0 \le i \le \ell-1}(g_1(\alpha y)-\zeta^i\alpha^{q} g_2(y)). $$ 
Since $g_1(\alpha y)-\zeta^i\alpha^qg_2(y)=(\alpha^{q}-\zeta^i \alpha^q)y^{q}+(\mbox{ lower terms })$
and $\alpha^{q}-\zeta^i \alpha^q \ne 0$ if $0<i \le \ell-1$, 
the degree of $\hat{f}(t,y)$ as a polynomial over $K(t)$ is $> q(\ell-1)$ if and only if $g_1(\alpha y)-\alpha^{q} g_2(y) \ne 0$ as a polynomial. 

Assume that $g_1(\alpha X)-\alpha^{q} g_2(X)=0$. 
Then, we have $g_1(\alpha X)^{\ell}+\lambda g_2(X)^\ell=0$ and the equation  
$$\sum_{i=0}^{\ell-1}\binom{\ell}{i} g_1(t)^{\ell-1-i} g_1(\alpha y)^i+\frac{\mu}{g_1(t)}=0.  $$
According to \cite[III. 1. 14]{stichtenoth}, for each solution $t_0$ of $g_1(t)=0$, $\hat{\pi}_Q^{-1}((t_0:1))$ consists of a single point $\hat{Q}_{t_0} \in \hat{C}$ with $e_{\hat{Q}_{t_0}}=q(\ell-1)$ for the projection $\hat{\pi}_{Q}$. 
Therefore, $\pi^{-1}(Q)$ consists of exactly $q$ points and there exist exactly $q$ distinct tangent directions. 
We consider the projection $\pi_P$ from $P=(1:0:0)$. 
The set $\hat{\pi}_P^{-1}(\pi_P(Q))=\pi^{-1}(C \cap \{Z=0\})$ consists of exactly $q \ell$ points. 
Therefore, $\hat{\pi}_P$ is not ramified at any point in $\pi^{-1}(C \cap \{Z=0\})$. 
By the Riemann-Hurwitz formula, we have $p_g(C)=\frac{q\ell(q(\ell-1)-2)}{2}+1$. 

Assume that $g_1(\alpha X)-\alpha^{q} g_2(X) \ne 0$.  
Then, the degree of the projection $\pi_Q$ is at least $q(\ell-1)+1$. 
This implies the multiplicity at $Q$ is at most $q-1$ and the cardinality of $\pi^{-1}(C \cap \{Z=0\})$ is strictly less than $q\ell$. 
When we consider the projection $\pi_P$ from $P=(1:0:0)$, there exists a ramification point in $\pi^{-1}(C \cap \{Z=0\})$. 
Let $v$ be the index at the point. 
By Fact \ref{Galois covering}(a)(b), $v$ is the index for any point in $\pi^{-1}(C \cap \{Z=0\})$ and $v$ divides $p^e\ell$. 
Since $C \cap \{Z=0\}$ consists of exactly $\ell$ points, $v$ divides $q$, and $\pi^{-1}(Q)$ consists of at $q/v$ points. 
By the Riemann-Hurwitz formula, $$2p_g(C)-2 \ge q\ell(-2)+q\ell\times\frac{q\ell}{\ell}(\ell-1)+\frac{q\ell}{v}\times v.$$ 
Then, $p_g(C) \ge \frac{q\ell(q(\ell-1)-1)}{2}+1$. 
\end{proof} 

\begin{example} 
There exist curves whose genus attains the lower bound as in assertion (b) for each $q, \ell$. 
Let $g_1(X)=g_2(X)$ and $\lambda=-1$. 
Then, $\alpha =1 \in K$ satisfies $\alpha^{q\ell}+\lambda=1-1=0$ and $g_1(\alpha X)-\alpha^{q} g_2(X)=g_1(X)-g_1(X)=0$. 
\end{example}

\section{Proof of assertion $(d)$ in Theorem \ref{main}}

Assume that $\ell$ is odd, $\ell-1$ is not divisible by $p$ and $g_1(\alpha X)-\alpha^{q} g_2(X)=0$. 
Then, $p>2$ and $p_g(C)=\frac{q\ell(q(\ell-1)-2)}{2}+1$. 
Moreover, $m \ge 3$ if $m$ divides $q\ell$.   
We prove $\delta'(C)=2$. 
For a smooth point $R$, we call $R$ a flex if $I_R(C, T_RC) \ge 3$. 

\begin{lemma} \label{flex} 
Let $R=(x_0:y_0:1) \in C \setminus {\rm Sing}(C)$. 
Then, $R$ is a flex if and only if $g_1(x_0)=0$ or $g_2(y_0)=0$. 
In this case, $I_R(C, T_RC)=\ell$ and $T_RC$ passes through $(1:0:0)$ or $(0:1:0)$. 
\end{lemma}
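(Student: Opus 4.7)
The plan is to compute the tangent line at a smooth point $R = (x_0:y_0:1)$ explicitly and then analyze the local order of vanishing of $f$ along it. Since $g_1$ and $g_2$ are $p$-linearized polynomials, we have $g_1'(x) = \alpha_0$ and $g_2'(y) = \beta_0$, so
\[
f_x = \ell \alpha_0 g_1(x)^{\ell-1}, \qquad f_y = \lambda \ell \beta_0 g_2(y)^{\ell-1}.
\]
Smoothness of $R$ thus forces at least one of $g_1(x_0), g_2(y_0)$ to be nonzero. The affine tangent line $T_RC$ has equation
\[
\alpha_0 g_1(x_0)^{\ell-1}(x-x_0) + \lambda \beta_0 g_2(y_0)^{\ell-1}(y-y_0) = 0.
\]

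For the ``if'' direction, suppose $g_1(x_0)=0$; then $g_2(y_0)\ne 0$ since otherwise $f(R)=\mu\ne 0$. The tangent line becomes $y = y_0$, which evidently passes through $(1:0:0)$. To compute $I_R(C, T_RC)$, restrict $f$ to $y=y_0$: because $\lambda g_2(y_0)^\ell + \mu = 0$, we get $f(x,y_0) = g_1(x)^\ell$. Additivity of $g_1$ gives $g_1(x) = g_1(x-x_0)$, whose lowest order term in $(x-x_0)$ is $\alpha_0(x-x_0)$, so $g_1(x)^\ell$ has order exactly $\ell$ at $x_0$. Hence $I_R(C,T_RC)=\ell$, which is $\ge 3$ under the hypotheses of the section, so $R$ is a flex. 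The case $g_2(y_0)=0$ is symmetric and yields the tangent $x=x_0$ through $(0:1:0)$.

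For the ``only if'' direction, suppose both $g_1(x_0)$ and $g_2(y_0)$ are nonzero; I will show $I_R(C,T_RC)=2$. Setting $u=x-x_0$, $v=y-y_0$ and using additivity of $g_1, g_2$, expand
\[
f(x,y) - f(x_0,y_0) = \sum_{i\ge 1}\binom{\ell}{i}\bigl[g_1(x_0)^{\ell-i}g_1(u)^i + \lambda g_2(y_0)^{\ell-i}g_2(v)^i\bigr].
\]
Because $f$ is separated in $x$ and $y$, no mixed $uv$ term appears; the quadratic part (modulo $(u,v)^3$) is $\binom{\ell}{2}\alpha_0^2 g_1(x_0)^{\ell-2} u^2 + \binom{\ell}{2}\lambda\beta_0^2 g_2(y_0)^{\ell-2} v^2$. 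Parametrize $T_RC$ by $v = -\alpha_0 g_1(x_0)^{\ell-1}/(\lambda\beta_0 g_2(y_0)^{\ell-1})\cdot u$ and substitute; the coefficient of $u^2$ simplifies to
\[
\binom{\ell}{2}\alpha_0^2 g_1(x_0)^{\ell-2}\cdot \frac{g_1(x_0)^\ell + \lambda g_2(y_0)^\ell}{\lambda g_2(y_0)^\ell} = -\binom{\ell}{2}\alpha_0^2 g_1(x_0)^{\ell-2}\cdot\frac{\mu}{\lambda g_2(y_0)^\ell},
\]
using $g_1(x_0)^\ell + \lambda g_2(y_0)^\ell = -\mu$. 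Under the standing assumptions $p>2$, $p\nmid \ell$, $p\nmid \ell-1$, the binomial $\binom{\ell}{2}$ is nonzero in $K$; combined with $\alpha_0,\mu,\lambda \ne 0$, the whole coefficient is nonzero. Thus $I_R(C,T_RC)=2$, so $R$ is not a flex.

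The only real subtlety is the quadratic computation along the tangent and the use of $\lambda g_2(y_0)^\ell + g_1(x_0)^\ell = -\mu$ to verify that the quadratic form does not accidentally vanish; this is where the hypotheses $p>2$ and $p\nmid \ell-1$ enter essentially. Everything else is an application of the fact that $g_1,g_2$ are additive with linear coefficient $\alpha_0,\beta_0$.
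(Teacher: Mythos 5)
Your proof is correct, but it takes a genuinely different route from the paper's. The paper invokes the classical criterion that a smooth point is a flex if and only if the bordered Hessian determinant vanishes there: it computes $\det H(f)=-\lambda \alpha_0^2\beta_0^2\ell^3(\ell-1)g_1(x)^{\ell-2}g_2(y)^{\ell-2}(\lambda g_2(y)^\ell+g_1(x)^\ell)$, notes that the last factor equals $-\mu\ne 0$ on $C$, and concludes (using $p\nmid \ell(\ell-1)$) that the flexes are exactly the points with $g_1(x_0)g_2(y_0)=0$; the computation of $I_R(C,T_RC)=\ell$ at such points is the same restriction $f(x,y_0)=g_1(x)^\ell$ that you perform. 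You instead unwind the Hessian criterion by hand: expanding $f$ at $R$ via additivity of $g_1,g_2$, restricting to the explicitly parametrized tangent line, and checking that the degree-two coefficient is $-\binom{\ell}{2}\alpha_0^2 g_1(x_0)^{\ell-2}\mu/(\lambda g_2(y_0)^\ell)$, which is, up to a nonzero constant, exactly $\det H(f)(R)$. Your version is longer but self-contained: it avoids citing the flex--Hessian equivalence (which in characteristic $p$ deserves some care, and which the paper dismisses as well known), and it makes transparent exactly where the standing hypotheses enter --- $p>2$, $p\nmid\ell$ and $p\nmid\ell-1$ guarantee $\binom{\ell}{2}\ne 0$ in $K$, and $p\ge 3$ ensures the higher terms $u^p$ of $g_1(u)$ cannot contaminate the quadratic coefficient. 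Both arguments ultimately hinge on the same identity $g_1(x_0)^\ell+\lambda g_2(y_0)^\ell=-\mu\ne 0$.
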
 

\begin{proof}
We consider the Hessian matrix 
\begin{eqnarray*} 
H(f)&=&\left(\begin{array}{ccc}
f_{xx} & f_{xy} & f_{x} \\
f_{xy} & f_{yy} & f_{y} \\
f_{x} & f_{y} & 0 
\end{array}\right) \\
&=&
\left(\begin{array}{ccc} 
\ell(\ell-1)g_1(x)^{\ell-2}\alpha_0^2 & 0 & \ell g_1(x)^{\ell-1}\alpha_0 \\
0 & \lambda \ell(\ell-1)g_2(y)^{\ell-2}\beta_0^2 & \lambda\ell g_2(y)^{\ell-1}\beta_0 \\
\ell g_1(x)^{\ell-1}\alpha_0 & \lambda \ell g_2(y)^{\ell-1}\beta_0 & 0 
\end{array} \right). 
\end{eqnarray*}
Then, $\det H(f)=-\lambda \alpha_0^2\beta_0^2\ell^3(\ell-1)g_1(x)^{\ell-2}g_2(y)^{\ell-2}(\lambda g_2(y)^\ell+g_1(x)^\ell)$. 
It is well known that $R \in C \setminus {\rm Sing}(C)$ is a flex if and only if $\det H(f)(R)=0$ (see, for example, \cite[I.1.5]{shafarevich}).  
Since $\ell(\ell-1)$ is not divisible by $p$ by the assumptions, $R$ is a flex if and only if $g_1(x_0)=0$ or $g_2(y_0)=0$. 

If $g_1(x_0)=0$, then $g_2(y_0)^\ell+\mu=0$, by the defining equation. 
Since $f(x,y_0)=g_1(x)^\ell$, $T_RC$ is defined by $Y-y_0Z$ and $I_R(C, T_RC)=\ell$. 
This line passes through $(1:0:0)$. 
\end{proof}

\begin{proof}[Proof of Theorem \ref{main}(d)]
Assume by contradiction that a point $P'$ in the affine plane $Z \ne 0$ is outer Galois. 
Let $Q$ be a singular point. 
Since $\pi^{-1}(Q)$ consists of exactly $q$ points by Proposition \ref{general property}(b), $\hat{\pi}_{P'}$ is ramified at some point in $\pi^{-1}(Q)$ if and only if $P'$ lies on the line given by some tangent direction at $Q$. 
In this case, $P'$ is ramified at one point in $\pi^{-1}(Q)$ and unramified at the other $q-1$ points. 
By Fact \ref{Galois covering}(a), this is a contradiction. 
Therefore, we may assume that $\hat{\pi}_{P'}$ is unramified at each point in $\pi^{-1}(Q)$ for any singular point $Q$. 
Note that $\hat{\pi}_{P'}$ is ramified at some point $\hat{R} \in \hat{C}$, by the Riemann-Hurwitz formula.  
Now, $R=\pi(\hat{R}) \in C \setminus {\rm Sing}(C)$. 
By Fact \ref{Galois covering}(b) and the assumption on degree, $I_R(C, T_RC) \ge 3$. 
According to Lemma \ref{flex}, $I_R(C, T_RC)=\ell$. 
By the Riemann-Hurwitz formula again, there exist at least three points $R_1, R_2, R_3$ such that $T_{R_i}C \ni P'$ for each $i$ and $T_{R_i}C \ne T_{R_j}C$ if $i \ne j$. 
By Lemma \ref{flex}, $P'$ must be $(1:0:0)$ or $(0:1:0)$. 
This is a contradiction. 

Assume that $P' \in \{Z=0\}$ is outer Galois. 
Since $\pi^{-1}(C \cap \{Z=0\})$ consists of exactly $q\ell$ points by Proposition \ref{general property}(b), the projection $\hat{\pi}_{P'}$ is unramified at such points. 
Note that $\hat{\pi}_{P'}$ is ramified at some point $\hat{R} \in \hat{C}$, by the Riemann-Hurwitz formula.  
Now, $R=\pi(\hat{R}) \in C \setminus {\rm Sing}(C)$. 
By Fact \ref{Galois covering}(b) and the assumption on degree, $I_R(C, T_RC) \ge 3$. 
By Lemma \ref{flex}, $P'$ must be $(1:0:0)$ or $(0:1:0)$. 
\end{proof} 

\section{Proof of assertion $(e)$ in Theorem \ref{main}}

Let $\ell=2$, let $g_1(X)=g_2(X)=g(X):=X^{p^e}+\alpha_{e-1}X^{p^{e-1}}+\dots+\alpha_0X$ and let $\mathbb F_{q_0}:=\fq \cap (\bigcap_{\{i>0:\alpha_i \ne 0\}}\mathbb F_{p^i})$. 
Then, $p>2$, since $p$ does not divide $\ell=2$. 
We consider the curve $C$ defined by 
$$ g(x)^2+\lambda g(y)^2+\mu=0, $$
where $\lambda \in \mathbb F_{q_0}$.
Note that $g(\gamma x)=\gamma g(x)$ if $\gamma \in \mathbb F_{q_0}$, and $\alpha^2=-\lambda$, since $\alpha^{2q}=-\lambda=-\lambda^q$.  
The following Lemma implies the former assertion of (e). 

\begin{lemma} \label{rationality0}
Let $\gamma \in \mathbb F_{q_0}$ and let $P'=(\gamma:1:0)$.
If $\gamma^{2}+\lambda \ne 0$, then $P'$ is outer Galois. 
In particular, $\delta'(C) \ge q_0-1$, and $\delta'(C) \ge q_0+1$ if $\alpha \not\in \mathbb F_{q_0}$. 
\end{lemma}

\begin{proof}
The projection $\pi_{P'}$ from $P'$ is given by $\pi_{P'}(x:y:1)=(x-\gamma y:1)$. 
Let $t:=x-\gamma y$. 
Then, we have the field extension $K(t, y)/K(t)$ given by $\hat{f}(t, y):=g(t+\gamma y)^2+\lambda g(y)^2+\mu=0$. 
Since $g(\gamma x)=\gamma g(x)$ and $\gamma^2+\lambda \ne 0$, we have
\begin{eqnarray*} 
\hat{f}(t, y) &=& (g(t)+\gamma g(y))^2+\lambda g(y)^2+\mu \\
&=&(\gamma^{2}+\lambda)g(y)^2+2\gamma g(t)g(y)+g(t)^2+\mu \\
&=&(\gamma^{2}+\lambda)g(y)\left(g(y)+\frac{2\gamma}{\gamma^2+\lambda}g(t)\right)+g(t)^2+\mu.
\end{eqnarray*} 
Let $\eta:=\frac{2\gamma}{\gamma^2+\lambda} \in \mathbb F_{q_0}$. 
Note that $\eta^{q_0}=\eta$ and $g(\eta \times x)=\eta \times g(x)$. 
Then, the set $\{y+\beta: g(\beta)=0\}\cup\{-y-\eta t+\beta:g(\beta)=0\} \subset K(t, y)$ consists of all roots of $\hat{f}(t, y) \in K(t)[y]$. 
Therefore, $P'$ is outer Galois. 

Since the number of elements $\gamma \in \mathbb{F}_{q_0}$ with $\gamma^2+\lambda=0$ is at most two, we have $\delta'(C) \ge q_0+1-2=q_0-1$. 
If there exists $\gamma \in \mathbb{F}_{q_0}$ with $\gamma^2+\lambda=0$, then $\gamma=\pm \alpha$ and hence, $\alpha \in \mathbb F_{q_0}$.
Therefore, $\delta'(C) \ge q_0+1$ if $\alpha \not\in \mathbb F_{q_0}$.  
\end{proof}

Next we consider the latter assertion of (e). 
By using the Hessian matrix (see the proof of Lemma \ref{flex}), we have the following. 

\begin{lemma} \label{flex2} 
The curve $C$ has no flex in the affine plane $Z \ne 0$. 
\end{lemma}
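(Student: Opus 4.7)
The plan is to reuse, with $\ell=2$ and $g_1=g_2=g$, the Hessian calculation that already appears in the proof of Lemma \ref{flex}. That computation produced
$$\det H(f)=-\lambda\alpha_0^2\beta_0^2\ell^3(\ell-1)\,g_1(x)^{\ell-2}g_2(y)^{\ell-2}\bigl(\lambda g_2(y)^\ell+g_1(x)^\ell\bigr),$$
and the criterion to apply is that an affine smooth point $R\in C\setminus\mathrm{Sing}(C)$ is a flex if and only if this determinant vanishes at $R$. So it suffices to show the specialized determinant has no zero on $C$ in the affine patch.

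I would first substitute $\ell=2$, $g_1=g_2=g$, and $\beta_0=\alpha_0$ (the latter because both coefficients are the linear coefficient of the same polynomial $g$). The crucial point is that the exponents $\ell-2$ become $0$, so the factors $g_1(x)^{\ell-2}$ and $g_2(y)^{\ell-2}$ that caused the flexes in the odd $\ell$ case disappear. The formula collapses to
$$\det H(f)=-8\lambda\alpha_0^4\bigl(g(x)^2+\lambda g(y)^2\bigr).$$

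Next, I would use the defining equation of $C$: on $C$ we have $g(x)^2+\lambda g(y)^2=-\mu$, so
$$\det H(f)\big|_{C}=8\lambda\alpha_0^4\mu,$$
which is a nonzero constant because $p>2$ (given in this section), $\lambda,\mu\in K\setminus\{0\}$ by hypothesis, and $\alpha_0\ne 0$ by definition of $g$. Therefore the Hessian determinant is nowhere zero on the affine portion of $C$, so no affine smooth point of $C$ is a flex.

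I do not anticipate a real obstacle here — the proof is a one-line specialization of the previous Hessian computation combined with the defining equation. The only minor thing to keep in mind is that the Hessian criterion is applied only at smooth points (flexes are smooth by definition), which is automatic since Proposition \ref{general property}(a) shows that all singular points of $C$ lie on $Z=0$.
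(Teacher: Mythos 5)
Your proof is correct and is exactly the argument the paper intends: the paper's own justification is the one-line remark ``by using the Hessian matrix (see the proof of Lemma \ref{flex})'', and your specialization $\ell=2$, $g_1=g_2=g$, $\beta_0=\alpha_0$ giving $\det H(f)|_C=8\lambda\alpha_0^4\mu\ne 0$ is precisely the computation being referenced. No issues.
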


Now, assume that $\fq=\mathbb {F}_{q_0}$. 
Then, we have the equation 
$$ (x^q+\alpha_0x)^2+\lambda (y^q+\alpha_0 y)^2+\mu=0. $$
Making the variable change $Z \mapsto (1/\sqrt[q-1]{-\alpha_0}) Z$, we may assume that $\alpha_0=-1$. 
In this case, we prove that all outer Galois points are $\fq$-rational points on the line $Z=0$. 

We determine special multiple tangent lines. 

\begin{lemma} \label{tangent}
Let $R \in C \setminus {\rm Sing}(C)$. 
If $T_{R}C$ has distinct $q$ contact points in $C$, then $T_{R}C$ intersects the line $Z=0$ at an $\fq$-rational point.  
\end{lemma}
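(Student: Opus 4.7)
The plan is to parametrize $T_R C$, pull back the defining equation of $C$ to this tangent line, and extract $\mathbb{F}_q$-rationality from the very sparse form that the resulting univariate polynomial must take. Writing $R = (x_0:y_0:1)$, $u = g(x_0)$, $v = g(y_0)$, and $(a,b) = (-\lambda v, u)$ (a nonzero tangent direction along $T_R C$ since $R$ is smooth), substitute $(x,y) = (x_0 + as, y_0 + bs)$ into $f(x,y) = g(x)^2 + \lambda g(y)^2 + \mu$. Using $g(X+Y) = g(X) + g(Y)$ for $g(X) = X^q - X$, the relation $u^2 + \lambda v^2 + \mu = 0$, and the tangency identity $ua + \lambda v b = 0$, the constant and linear terms in $s$ cancel and the expansion collapses into
\[
\varphi(s) := f(x_0+as, y_0+bs) = Ds^2 + As^q + Bs^{q+1} + Cs^{2q},
\]
with $D = -\lambda\mu$, $A = 2\lambda(u^q v - u v^q)$, $B = -2\lambda(\lambda v^{q+1} + u^{q+1})$, $C = -\lambda\mu^q$, and all other coefficients zero.

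The hypothesis of $q$ distinct contact points says that, since $\deg C = 2q$, the intersection divisor is $2(P_1 + \cdots + P_q)$, equivalently $\varphi(s) = h(s)^2$ for a separable polynomial $h$ of degree $q$. Writing $h(s) = \sum_{i=0}^q h_i s^i$ and matching coefficients of $h(s)^2$ against the sparse $\varphi(s)$: from the vanishing of the constant term, $h_0 = 0$; an induction on $k = 3, 4, \ldots, q-1$ using the vanishing of the coefficient of $s^k$ forces $h_2 = h_3 = \cdots = h_{q-2} = 0$; the vanishing of the coefficient of $s^{2q-2}$ then reduces, after these cancellations, to $h_{q-1}^2 = 0$, so $h_{q-1} = 0$; and finally the coefficient of $s^q$ in $h(s)^2$ becomes $2 h_1 h_{q-1} = 0$, forcing $A = 0$. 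Thus $h(s) = h_1 s + h_q s^q$ is an additive polynomial with $h_1, h_q \neq 0$, and $u^q v = u v^q$.

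The identity $uv(u^{q-1} - v^{q-1}) = 0$ splits into three cases: $u = 0$, $v = 0$, or $v = \omega u$ with $\omega := v/u \in \mathbb{F}_q^{\times}$. Correspondingly $T_R C$ has equation
\[
Y - y_0 Z = 0, \qquad X - x_0 Z = 0, \qquad X + \lambda \omega Y - (x_0 + \lambda \omega y_0) Z = 0,
\]
respectively. In all three cases the coefficients of $X$ and $Y$ already lie in $\mathbb{F}_q$ because $\lambda, \omega \in \mathbb{F}_q$, so defining $T_R C$ over $\mathbb{F}_q$ reduces to checking that the remaining coefficient of $Z$ is $\mathbb{F}_q$-rational: $y_0 \in \mathbb{F}_q$, $x_0 \in \mathbb{F}_q$, or $x_0 + \lambda \omega y_0 \in \mathbb{F}_q$.

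The main obstacle I expect is exactly this last step. The natural tool is the $\mathbb{F}_q$-additive structure that has been extracted: the kernel $V = \ker h \subset K$ of the additive polynomial $h(s) = h_1 s + h_q s^q$ is a one-dimensional $\mathbb{F}_q$-subspace of size $q$, so the $q$ contact points form the $\mathbb{F}_q$-affine line $\{R + \tau(a,b) : \tau \in V\}$ in $\mathbb{A}^2$. Combined with the residual identifications $h_1^2 = D$, $h_q^2 = C$, and $2 h_1 h_q = B$, which pin down the scaling of $V$ explicitly in terms of $\lambda, \mu, u, v$, one must match the additive-polynomial scaling with the Frobenius action to force the tangent line to meet $\{Z=0\}$ at an $\mathbb{F}_q$-rational point; this, together with the $\mathbb{F}_q$-rational direction already obtained, pins $T_R C$ as a line defined over $\mathbb{F}_q$. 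The delicate point is precisely this compatibility between the additive-polynomial structure of the contact set and the Frobenius.
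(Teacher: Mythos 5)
Your computation of $\varphi(s)$ and the coefficient-matching argument forcing $h(s)=h_1s+h_qs^q$ and $A=2\lambda(u^qv-uv^q)=0$ are correct, and they recover, by a different and arguably cleaner route, the key relation that the paper extracts from the Gauss map (the paper compares the Gauss images of two contact points and uses the defining equation to get $u_1=\pm u_i$, $v_1=\pm v_i$, hence $\mathbb{F}_q$-rational differences $\beta_x,\beta_y$ and an $\mathbb{F}_q$-rational slope). Up to this point both arguments prove the same thing: the tangent \emph{direction}, i.e.\ the point $T_RC\cap\{Z=0\}$, is $\mathbb{F}_q$-rational.

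However, the step you flag as ``the main obstacle'' --- the $\mathbb{F}_q$-rationality of the remaining $Z$-coefficient --- is a genuine gap, and it cannot be closed, because that coefficient is never $\mathbb{F}_q$-rational. In your third case, with $v=\omega u$ and $\omega\in\mathbb{F}_q^{\times}$, one computes $(x_0+\lambda\omega y_0)^q-(x_0+\lambda\omega y_0)=u+\lambda\omega v=u(1+\lambda\omega^2)=-\mu/u\neq0$; in the case $u=0$ one would need $y_0\in\mathbb{F}_q$, i.e.\ $v=0$, which forces $\mu=0$. A concrete counterexample to the lemma as stated: for $q=3$, $\lambda=\mu=1$ and $x_0$ with $x_0^3-x_0=1$, the point $(x_0:x_0:1)$ is a smooth point of $(x^3-x)^2+(y^3-y)^2+1=0$, its tangent line $X+Y+x_0Z=0$ restricts the defining polynomial to $2(s^3-s)^2$ (so the line has exactly three distinct contact points), yet it is not defined over $\mathbb{F}_3$. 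The paper's own proof has the same defect: the asserted identity $(x_1+y_1(-\beta_x/\beta_y))^q=x_1+y_1(-\beta_x/\beta_y)$ fails by exactly the computation above, since the difference equals $-\mu/u_1\neq 0$. So the compatibility you were hoping to extract between the additive structure of the contact set and Frobenius does not exist; what is actually provable --- and what the application in Lemma \ref{rationality1} really uses --- is only that $T_RC\cap\{Z=0\}$ is an $\mathbb{F}_q$-rational point, which both your argument and the paper's do establish.
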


\begin{proof} 
The Gauss map $\gamma: C \dashrightarrow (\mathbb P^2)^* \cong \mathbb P^2$, which sends a smooth point $R$ to the tangent line $T_RC$ at $R$, is given by 
\begin{eqnarray*} 
& &  (\partial F/\partial X:\partial F/\partial Y:\partial F/\partial Z) \\ 
&=&(-2(x^q-x):-2\lambda(y^q-y):2x(x^q-x)+2\lambda y(y^q-y)) \\
&=& (x^q-x:\lambda (y^q-y):-x(x^q-x)-\lambda y(y^q-y))
\end{eqnarray*} 
Let $R_i=(x_i:y_i:1)$ for $i=1, \ldots, q$, let $R_i \ne R_j$ and let $T_{R_i}C=T_{R_j}C$ for any $i, j$. 
Assume that $x_1^q-x_1 \ne 0$. 
Then, $x_i^q-x_i \ne 0$. 
Since $T_{R_1}C=T_{R_i}C$, $(y_1^q-y_1)/(x_1^q-x_1)=(y_i^q-y_i)/(x_i^q-x_i)$ and  
$$ \frac{\mu}{(x_1^q-x_1)^2}=-1-\lambda \left(\frac{y_1^q-y_1}{x_1^q-x_1}\right)^2=\frac{\mu}{(x_i^q-x_i)^2}$$
by the defining equation, 
we have $x_1^q-x_1=\pm(x_i^q-x_i)$ and $y_1^q-y_1=\pm(y_i^q-y_i)$. 

Assume that there exists $i$ such that $x_1^q-x_1=x_i^q-x_i$. 
We may assume $i=2$. 
Then, $y_1^q-y_1=y_2^q-y_2$ and there exist $\beta_x, \beta_y \in \fq$ such that $x_2=x_1+\beta_x$ and $y_2=y_1+\beta_y$. 
If $\beta_y=0$, then $\beta_x=0$, by the condition $x_1+\lambda y_1(y_1^q-y_1)/(x_1^q-x_1)=x_2+\lambda y_2(y_2^q-y_2)/(x_2^q-x_2)$. 
Therefore, $\beta_y \ne 0$. 
By using the condition $x_1+\lambda y_1(y_1^q-y_1)/(x_1^q-x_1)=x_2+\lambda y_2(y_2^q-y_2)/(x_2^q-x_2)$, we have $\lambda (y_1^q-y_1)/(x_1^q-x_1)=-\beta_x/\beta_y$. 
Then, $T_{R_1}C$ is defined by 
$$ X-\frac{\beta_x}{\beta_y}Y+\left(-x_1+y_1\frac{\beta_x}{\beta_y}\right)Z=0,  $$
and meets the line $Z=0$ at the point $(\beta_x:\beta_y:0)$, which is $\fq$-rational.

Assume that $x_1^q-x_1=-(x_i^q-x_i)$ for any $i$. 
Then, there exists $\alpha_i \in \fq$ such that $x_i=-x_1+\alpha_i$. 
Then, $x_3-x_2=(-x_1+\alpha_3)-(-x_1+\alpha_2)=\alpha_3-\alpha_2 \in \fq$. We can reduce to the above case.

If $y_1^q+y_1 \ne 0$, then we have the same assertion, similarly to the above discussion. 
\end{proof}

\begin{lemma} \label{rationality1}
Assume that $\alpha \not\in \fq$. 
Then, any outer Galois point is an $\fq$-rational point on the line $Z=0$. 
In particular, $\delta'(C) \le q+1$. 
\end{lemma} 

\begin{proof}
Let $P'=(\gamma:1:0)$. 
We consider the projection from $P'$. 
The projection $\pi_{P'}$ from $P'$ is given by $\pi_{P'}(x:y:1)=(x-\gamma y:1)$. 
Let $t:=x-\gamma y$. 
Then, we have the field extension $K(t, y)/K(t)$ given by $ \hat{f}(t, y):=((t+\gamma y)^q-(t+\gamma y))^2+\lambda (y^q-y)^2+\mu=0$. 
We have
\begin{eqnarray*} 
\hat{f}(t, y) &=& ((t^q-t)+ (\gamma^q y^q-\gamma y))^2+\lambda (y^q-y)^2+\mu \\
&=&(\gamma^{2q}+\lambda)y^{2q}-2(\gamma^{q+1}+\lambda)y^{q+1}+(\mbox{ lower terms }). 
\end{eqnarray*} 
Note that $\gamma^{2q}+\lambda=0$ if and only if $\gamma^2+\lambda=0$, since $\lambda \in \fq$. 
If $\gamma^2+\lambda \ne 0$, the degree of $\hat{f}(t, y)$ is $2q$. 
If $\gamma^2+\lambda=0$, then $\gamma^2=\alpha^2$ and the degree is $q+1$, since $\gamma^{q+1}+\lambda=\alpha^{q+1}-\alpha^2=\alpha(\alpha^q-\alpha) \ne 0$. 
We also have   
$$ 
\frac{dt}{dy}=\frac{\gamma((t^q-t)+(\gamma^q y^q-\gamma y))+\lambda(y^q-y)}{(t^q-t)+(\gamma^q y^q-\gamma y)}. 
$$ 
We consider the condition $\hat{f}(t,y)=dt/dy=0$. 
Taking $t^q-t+(\gamma^q y^q-\gamma y)=-(\lambda/\gamma)(y^q-y)$, we have $(\lambda^2/\gamma^2+\lambda)(y^q-y)^2+\mu=0$. 
If $\lambda \ne -\gamma^2$, we have a solution $(t_0, y_0)$ satisfying $\hat{f}(t_0, y_0)=(dt/dy)(t_0, y_0)=0$. 
This implies that $\pi_{P'}$ is ramified at some point $R$ in the affine plane $Z \ne 0$. 
If $\lambda = -\gamma^2$ (this implies that $P'$ is singular), $\hat{f}(t,y)$ is of degree $q+1$ and we does not have a solution $(t_0, y_0)$ satisfying $\hat{f}(t_0, y_0)=(dt/dy)(t_0, y_0)=0$. 
This implies that $\pi_{P'}$ is not ramified at any point $R$ in the affine plane $Z \ne 0$.

Let $O \in \mathbb P^2 \setminus C$ be a point in the affine plane $Z \ne 0$ and let $Q$ be a singular point. 
Then, the line $\overline{OQ}$ passes through exactly $q+1$ smooth points with multiplicity one except for $Q$, by the above discussion. 
Since the cardinality of $\pi^{-1}(Q)$ is $<q-1$ by Proposition \ref{general property}(c), the projection $\hat{\pi}_O$ is ramified at some point in $\pi^{-1}(Q)$. 
By Fact \ref{Galois covering}(a), $O$ is not Galois. 
Therefore, all Galois points lie on the line $Z=0$. 

Let $P'=(\gamma:1:0)$ be outer Galois. 
By Lemma \ref{flex2}, $I_R(C, T_RC)=2$.
By Fact \ref{Galois covering}(a), $T_RC$ contains exactly $q$ contact points.  
By Lemma \ref{tangent}, the point given by $T_RC \cap \{Z=0\}$ is $\fq$-rational. 
Therefore, $P'$ is $\fq$-rational. 
\end{proof}

\begin{lemma} \label{rationality2}
Assume that $\alpha \in \fq$. 
Then, any outer Galois point is an $\fq$-rational point on the line $Z=0$. 
In particular, $\delta'(C) \le q-1$. 
\end{lemma}

\begin{proof}
Firstly, we prove that points on the affine plane $Z \ne 0$ are not outer Galois. 
Assume by contradiction that $P'$ in the affine plane $Z \ne 0$ is outer Galois. 
Similarly to the proof of assertion (d), we may assume that $\hat{\pi}_{P'}$ is unramified at each point in $\pi^{-1}(Q)$ for any singular point $Q$. 
Note that $\hat{\pi}_{P'}$ is ramified at some point $\hat{R} \in \hat{C}$, by the Riemann-Hurwitz formula.  
Now, $R=\pi(\hat{R}) \in C \setminus {\rm Sing}(C)$. 
By Lemma \ref{flex2}, $I_R(C, T_RC)=2$.
By Fact \ref{Galois covering}(a), $T_RC$ contains exactly $q$ contact points.  
According to the Riemann-Hurwtiz formula again, considering $p_g(C)$, there exist $d=2q$ tangent lines containing $P'$ and $q$ contact points. 
However, by Lemma \ref{tangent}, such lines are at most $q+1$. 
Since $q+1<2q=d$, this is a contradiction.

Let $P' \in \{Z=0\}$ be outer Galois.  
Similarly to the proof of assertion (d), $\pi_{P'}$ is ramified at some point $R$ in the affine plane $Z \ne 0$. 
By Lemma \ref{flex2}, $I_R(C, T_RC)=2$.
By Fact \ref{Galois covering}(a), $T_RC$ contains exactly $q$ contact points.  
By Lemma \ref{tangent}, the point given by $T_RC \cap \{Z=0\}$ is $\fq$-rational. 
Therefore, $P'$ is $\fq$-rational. 

By Proposition \ref{general property}(a), ${\rm Sing}(C)=\{(\pm \alpha:1:0)\}$. 
The two singular points are $\fq$-rational. 
Therefore, $\delta'(C) \le q+1-2=q-1$. 
\end{proof}

As a consequence of Lemmas \ref{rationality1} and \ref{rationality2}, we have the latter assertion of (e).

\

\begin{center} {\bf Acknowledgements} \end{center} 
The author was partially supported by JSPS KAKENHI Grant Numbers 22740001, 25800002.

\end{document}